\RequirePackage[l2tabu, orthodox]{nag}
\RequirePackage{fixltx2e}
\documentclass[10pt,a4paper,reqno,oneside,final]{smfart}

\addtolength\topmargin{-.70in} \addtolength\textheight{1.1in}
\addtolength\oddsidemargin{-.12\textwidth}
\addtolength\evensidemargin{-.12\textwidth}
\addtolength\textwidth{.26\textwidth}

\tolerance=1414
\setlength\emergencystretch{1.5em}
\hbadness=1414
\setlength\hfuzz{.3pt}
\widowpenalty=10000
\raggedbottom
\setlength\vfuzz{.3pt}

%\usepackage{fancyhdr}
%\fancyhf{}
%\fancyhead[L]{H.-Y. LIN}
%\fancyhead[R]{\bfseries\rightmark}
%\fancyfoot[C]{\thepage}
%\pagestyle{fancy}
%\usepackage{geometry}
\usepackage{leftidx}
\usepackage{appendix}
\usepackage{tikz}\usetikzlibrary{decorations.markings,matrix,arrows}
\usepackage{amsfonts}
\usepackage{amsthm}
\usepackage[T1]{fontenc}
\usepackage[mathscr]{eucal}
\usepackage{setspace}
\usepackage{amssymb,latexsym,amsmath,amscd}
\usepackage{graphicx}
\usepackage{showkeys}
\usepackage[french, english]{babel}
\usepackage{layout}
\usepackage{enumerate}
\usepackage{indentfirst}
\usepackage[varg]{pxfonts}
\usepackage[stretch=10]{microtype}
\usepackage{booktabs}
\usepackage{xspace}
\usepackage{eufrak}
\usepackage[colorlinks=false, pdfborder={0 0 0}]{hyperref} 
\usepackage{nameref}
\usepackage{cleveref}
\usepackage{calrsfs}
\usepackage{filecontents}
\usepackage{mathtools}
%\usepackage{MnSymbol}
%\usepackage{mathspec} 
%\usepackage{fontspec} 
%\usepackage{anysize} %%pour pouvoir mettre les marges qu'on veut 
%\marginsize{2.5cm}{2.5cm}{2.5cm}{2.5cm} 
\usepackage{titlesec}
\usepackage{fixltx2e}
\usepackage{todonotes}

\titleformat{\paragraph}
{\normalfont\normalsize\bfseries}{\theparagraph}{1em}{}
\titlespacing*{\paragraph}
{0pt}{3.25ex plus 1ex minus .2ex}{1.5ex plus .2ex}

\setstretch{1.2}

%\makeatletter
%\newcommand{\neutralize}[1]{\expandafter\let\csname c@#1\endcsname\count@}
%\makeatother

\theoremstyle{plain}
\newtheorem{thm}{Theorem}[section]

\newtheorem*{thm*}{Theorem}

\newtheorem{lem}[thm]{Lemma}

\newtheorem{pro}[thm]{Proposition}
\newtheorem{pro-def}[thm]{Proposition-Definition}
\newtheorem{cor}[thm]{Corollary}
\newtheorem{conj}[thm]{Conjecture}

\theoremstyle{definition}

\theoremstyle{remark}

\newcommand{\bC}{\mathbf{C}}

\newcommand{\bR}{\mathbf{R}}

\newcommand{\bQ}{\mathbf{Q}}

\newcommand{\bZ}{\mathbf{Z}}

\newcommand{\gO}{\Omega}
\newcommand{\ga}{\alpha}
\newcommand{\gS}{\Sigma}
\newcommand{\gb}{\beta}

\newcommand{\go}{\omega}

\newcommand{\cH}{\mathcal{H}}

\newcommand{\cA}{\mathcal{A}}

\newcommand{\cO}{\mathcal{O}}

\newcommand{\colonec}{\mathrel{:=}}
\newcommand{\bss}{\backslash}

\newcommand{\Alb}{\mathrm{Alb}}
\newcommand{\alb}{\mathrm{alb}}

\newcommand{\Hol}{\mathrm{Hol}}
\newcommand{\tors}{\mathrm{tors}}
\newcommand{\SU}{\mathrm{SU}}
\newcommand{\Sp}{\mathrm{Sp}}

\newcommand{\CH}{\mathrm{CH}}

\newcommand{\NS}{\mathrm{NS}}

\newcommand{\Sym}{\mathrm{Sym}}

\newcommand{\Pic}{\mathrm{Pic}}
\newcommand{\rank}{\mathrm{rank}}

\newcommand{\cupp}{\mathbin{\smile}}

\newcommand{\dto}{\dashrightarrow}

\newcommand{\vast}{\bBigg@{4}}
\newcommand{\Vast}{\bBigg@{5}}

%\makeatletter
%\let\orgdescriptionlabel\descriptionlabel
%\renewcommand*{\descriptionlabel}[1]{%
%  \let\orglabel\label
%  \let\label\@gobble
%  \phantomsection
%  \edef\@currentlabel{#1}%
%  %\edef\@currentlabelname{#1}%
%  \let\label\orglabel
%  \orgdescriptionlabel{#1}%
%}
%\makeatother
 %\setcounter{chapter}{-1}
%\setcounter{secnumdepth}{1}
\tikzset{node distance=2cm, auto}

\numberwithin{equation}{section}

\title{Lagrangian constant cycle subvarieties in Lagrangian fibrations} 

\author{Hsueh-Yung Lin}

\address{
Mathmatisches Institut \\
Universität Bonn \\
Endenicher Allee 60\\
53115 Bonn, Germany.}
 \email{hsuehyung.lin.math@gmail.com}

%\chapterstyle{madsen}

\begin{document}
\selectlanguage{english}

\begin{abstract}
We show that the image of a dominant meromorphic map from an irreducible compact Calabi-Yau manifold $X$  whose general fiber is of dimension strictly between $0$ and $\dim X$ is rationally connected. Using this result, we construct for any hyper-Kähler manifold $X$ admitting a Lagrangian fibration a Lagrangian constant cycle subvariety $\gS_H$ in $X$ which depends on a divisor class $H$ whose restriction to some smooth Lagrangian fiber is ample. If $\dim X = 4$, we also show that up to a scalar multiple, the class of a zero-cycle supported on $\gS_H$ in $\CH_0(X)$ does not depend neither on $H$ nor on the Lagrangian fibration (provided $b_2(X) \ge 8$).
\end{abstract}

\maketitle

\section{Introduction}

This note is devoted to the construction of some subvarieties $Y$ in a projective hyper-Kähler manifold $X$   admitting a Lagrangian fibration such that all points in $Y$ are rationally equivalent in $X$. These subvarieties, called constant cycle subvarieties in~\cite{HuybCCC}, depend on the choice of a Lagrangian fibration and a divisor class $H \in \Pic(X)$ whose restriction to some smooth Lagrangian fiber is ample, thus the image of the Gysin map $\CH_0(Y) \to \CH_0(X)$   depends \emph{a priori} on these choices as well. The result of this note is motivated by the conjectural picture on the splitting property of the conjectural Bloch-Beilinson filtration of projective hyper-Kähler manifolds due to Beauville~\cite{BeauvilleSplitBB} and Voisin~\cite{VoisinHKcoisotp} as we will explain below.

The study of constant cycle subvarieties in hyper-Kähler manifolds was initiated by Huybrechts in the case of K3 surfaces~\cite{HuybCCC}.
Our motivation for studying these subvarieties comes rather from the attempt to generalize the Beauville-Voisin canonical zero-cycle of a projective K3 surface~\cite{BV} to higher dimensional cases. For a K3 surface $S$, recall that there are at least two ways to characterize the canonical zero-cycle  $o_S$:
\begin{enumerate}[i)]
\item $o_S$ is the degree-one generator of the image of the intersection product~\cite{BV} 
$$\cupp : \CH^1(S) \otimes \CH^1(S) \to \CH^2(S);$$
\item $o_S$ is the class of a point supported on a constant cycle curve in $S$~\cite[Lemma 2.2]{Voisin0cycleK3}. 
\end{enumerate} 

Each characterization gives \emph{a priori} different generalization of $o_S$. The first one is related to Beauville's conjecture on the weak splitting property of the Chow ring of projective hyper-Kähler manifolds~\cite{BeauvilleSplitBB}:
\begin{conj}[Beauville~\cite{BeauvilleSplitBB}]\label{conj-Beauville}
Let $X$ be a projective hyper-Kähler manifold. The restriction of the cycle class map $\CH^\bullet(X)_\bQ \colonec \CH^\bullet(X) \otimes_\bZ \bQ \to H^\bullet(X,\bQ)$ to the $\bQ$-sub-algebra generated by divisor classes is injective.
\end{conj}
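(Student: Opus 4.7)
The plan is to reduce Conjecture~\ref{conj-Beauville} to a statement about top-degree zero-cycles, and then to attack that statement via the constant cycle subvarieties produced by Lagrangian fibrations that this paper constructs. First I would observe that if $P(D_1,\ldots,D_r)\in\CH^k(X)_\bQ$ is a polynomial relation among divisor classes with $k<\dim X$ that vanishes in $H^\bullet(X,\bQ)$, then for a general complete intersection $Z\subset X$ of ample classes the restriction $P|_Z$ still vanishes in cohomology, and by induction on dimension one reduces to the case of a relation living in $\CH_0(X)_\bQ$. Thus it suffices to prove that the image of the $\bQ$-subalgebra of $\CH^\bullet(X)_\bQ$ generated by divisor classes inside $\CH_0(X)_\bQ$ is one-dimensional.

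The next step is to produce a canonical class $o_X \in \CH_0(X)_\bQ$ and to show that, for $n=\dim X$ and arbitrary $D_1,\ldots,D_n \in \Pic(X)$,
\[
D_1\cdots D_n = (D_1\cdots D_n)_X \cdot o_X \quad \text{in } \CH_0(X)_\bQ,
\]
where $(D_1\cdots D_n)_X\in\bZ$ denotes the top intersection number. If $X$ admits a Lagrangian fibration $f\colon X\to B$, then for any $H\in\Pic(X)$ whose restriction to a smooth fiber is ample the paper furnishes a Lagrangian constant cycle subvariety $\gS_H$, and the invariance statement recalled in the abstract (independence of $H$ and of $f$, up to a scalar, when $b_2(X)\ge 8$) makes it reasonable to set $o_X \colonec [x]$ for any $x\in\gS_H$.

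The heart of the proof is therefore to show that any top-degree monomial $D_1\cdots D_n$ in divisors can be represented by a zero-cycle supported, up to rational equivalence, on a constant cycle subvariety. The natural attempt proceeds fiberwise: restricted to a general Lagrangian fiber $F$, an abelian variety, the Deninger--Murre decomposition of $\CH^\bullet(F)_\bQ$ reduces $D_1|_F\cdots D_n|_F$ to a multiple of the class of a torsion point, and torsion points of $F$ lie on a suitable $\gS_H$. The task is then to globalize this pointwise relation to an identity in $\CH_0(X)_\bQ$ by spreading out over $B$.

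The main obstacle is twofold. First, not every projective hyper-K\"ahler manifold is known to carry a Lagrangian fibration, so the approach above only directly addresses a (conjecturally generic) subclass; the general case would require a deformation-invariance argument for the relevant Chow-theoretic relations. Second, globalizing the fiberwise identity is delicate because the discriminant locus of $f$ in $B$ is a divisor, and the singular fibers contribute to $\CH_0(X)$ in a way not captured by the analysis over the smooth locus. A complete proof along these lines would plausibly combine the constant cycle construction of this paper with a Shen--Vial-type multiplicative Chow--K\"unneth decomposition, using the latter to propagate the relation from generic to special fibers.
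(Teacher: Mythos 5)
The statement you are trying to prove is Conjecture~\ref{conj-Beauville}: it is an open conjecture of Beauville, and the paper does not prove it (nor claim to). The paper only establishes much weaker partial results in its direction, namely the existence of Lagrangian constant cycle subvarieties for hyper-K\"ahler manifolds that \emph{already admit} a Lagrangian fibration, and the proportionality of the specific zero-cycles $H^n\cdot L^n$ where $L$ is the isotropic class $q(L)=0$ pulled back from the base. So the honest assessment is that your proposal cannot be a proof, and indeed you concede as much in your final paragraph. Beyond the gaps you acknowledge (not every hyper-K\"ahler manifold carries a Lagrangian fibration; the fiberwise identity does not globalize across the discriminant locus), there are two further problems worth naming.

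First, your opening reduction to zero-cycles is not valid. If $P(D_1,\dots,D_r)\in\CH^k(X)_\bQ$ vanishes in cohomology, restricting to a general complete intersection $Z$ and proving $P|_Z=0$ in $\CH_0(Z)_\bQ$ tells you nothing about $P$ itself: the Gysin/restriction maps on Chow groups are not injective, $Z$ is not hyper-K\"ahler so no inductive hypothesis applies, and there is no mechanism to lift a relation on $Z$ back to $X$. Beauville's conjecture in codimension $k<2n$ is genuinely harder than the top-degree case and is not known to reduce to it. Second, even in top degree your target identity $D_1\cdots D_{2n}=(D_1\cdots D_{2n})_X\cdot o_X$ for \emph{arbitrary} divisors is strictly stronger than what the paper's Theorem~\ref{thm-indep} gives: that theorem only treats monomials of the form $H^n\cdot L^n$ with $L$ the fibration class, and its proof leans on Voisin's result on the vanishing of $D^n\cdot L^n$ for $D$ restricting trivially to the fibers, together with Matsushita's conjecture as a hypothesis. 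An arbitrary product of $2n$ divisors with no factor of $L$ is not accessible by these methods. In short: there is no proof here, and the statement remains a conjecture.
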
  
The reader is referred to~\cite{BeauvilleSplitBB, VoisinBVconjHilbK3, FuBVKummer, RiessBconjLF, VoisinHKcoisotp} for recent developments of this conjecture. In particular, since $H^{4n}(X,\bQ) = \bQ$, Beauville's conjecture contains as a sub-conjecture the statement that the intersection of any $2n$ divisor classes in $\CH^\bullet(X)_\bQ$ is proportional to the same degree one zero-cycle $o_X \in \CH^{2n}(X)_\bQ$ where $2n$ is the dimension of $X$, which generalizes property $i)$ of $o_S$.

The generalization of property $ii)$ is formulated in~\cite[Conjectures  0.4 and 0.8]{VoisinHKcoisotp}: for $0 \le i \le n$, let $S_i \CH_0(X)$ denote the subgroup of $\CH_0(X)$ generated by the classes of points whose rational orbit is of dimension $\ge i$\footnote{The \emph{rational orbit} of a point $z \in X$ is the set $O_z \subset X$ of points in $X$ which are rationally equivalent to $z$. The set $O_z$ is a countable union of Zariski closed subsets of $X$ and we define $\dim O_z$ to be the maximum of the dimension of all irreducible components of $O_z$.}. One hopes that this decreasing  filtration $S_\bullet \CH_0(X)$ would define a splitting of the conjectural Bloch-Beilinson filtration $F^\bullet_{BB}$ in the sense that the inclusion $S_i\CH_0(X) \hookrightarrow \CH_0(X)$ induces an isomorphism
$$S_i\CH_0(X) \xrightarrow{\sim} \CH_0(X)/F^{2n-2i+1}_{BB}\CH_0(X).$$

Using the axioms of the Bloch-Beilinson conjecture, the surjectivity of the above map is proven in~\cite{VoisinHKcoisotp} to be predicted by the following conjecture:
\begin{conj}[\cite{VoisinHKcoisotp}]\label{conjccs}
Let $X$ be a projective hyper-Kähler manifold of dimension $2n$. For any $0 \le i \le n$, the dimension of the set of points $S_iX \subset X$ whose rational orbit has dimension $\ge i$ is $2n-i$.
\end{conj}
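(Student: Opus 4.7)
The plan is to split the conjecture into its two inequalities and treat them separately. Let $X_i := \{x \in X : \dim O_x \geq i\}$; the task is to show both $\dim X_i \leq 2n - i$ and $\dim X_i \geq 2n - i$.

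The lower bound is where the paper's construction naturally enters. Any irreducible constant cycle subvariety $Y \subset X$ of dimension $d$ sits inside $X_d$, because for every $y \in Y$ the whole of $Y$ lies in the rational orbit $O_y$. Thus each Lagrangian constant cycle subvariety $\gS_H$ produced in the paper witnesses $\dim X_n \geq n$, which settles the lower bound at the critical index $i = n$. For $i < n$, I would hope that letting $H$ vary in the Lagrangian-ample cone — and, if available, moving the Lagrangian fibration within a twistor-type or deformation family — produces an algebraic family of $\gS_H$'s whose union has the expected dimension $2n - i$. For $i > n$, one needs constant cycle subvarieties of dimension strictly greater than $n$, which cannot be Lagrangian (isotropy for the holomorphic symplectic form caps the dimension at $n$); such subvarieties would have to come from a genuinely new source, presumably the uniruled stratification produced by rational curves on projective hyper-K\"ahler manifolds.

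The upper bound is the main obstacle and I do not expect any elementary argument. The natural approach is a Bloch-Srinivas / Mumford-type strategy: if some irreducible $Z \subset X_i$ had dimension $> 2n - i$, then the fact that $Z \times Z$ is covered by rational equivalences parametrized by a variety of dimension $\geq i$ would force a partial decomposition of the diagonal $\Delta_X \in \CH^{2n}(X \times X)$ supported in a smaller codimension than permitted, and pairing with holomorphic forms on $X$ — especially the symplectic form $\sigma_X$ and its exterior powers — should yield a contradiction with the non-vanishing of $H^0(X, \Omega_X^j)$ in the appropriate degree. Making this rigorous without invoking the still-conjectural Bloch-Beilinson filtration is precisely the source of difficulty; I expect only the extreme cases ($i = 2n$, which recovers a Mumford-type statement, and, via the present paper, $i = n$) to be tractable unconditionally.

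Given the scope of the paper, a realistic intermediate target is the lower half of the $i = n$ case under the Lagrangian assumption: using the construction of $\gS_H$ together with the preceding theorem on rational connectedness of images of dominant meromorphic maps from Calabi-Yau manifolds with intermediate fiber dimension, one should be able to confirm both that $\gS_H$ has dimension exactly $n$ and that the rational orbit of a general point of $\gS_H$ does not jump up to dimension $> n$. The latter is a genuine concern: one must exclude that $\gS_H$ accidentally lies inside $X_{n+1}$, and the rational connectedness result should exclude this by ruling out rationally connected subvarieties in $\gS_H$ of dimension larger than expected, since $\gS_H$ inherits no holomorphic symplectic structure but does inherit constraints from being Lagrangian in $X$.
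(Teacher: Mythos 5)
The statement you are trying to prove is a \emph{conjecture} (due to Voisin, \cite{VoisinHKcoisotp}); the paper does not prove it and does not claim to. Its actual contribution is only the case $i=n$ under the additional hypothesis that $X$ admits a Lagrangian fibration, via the construction of $\gS_{\pi,H}$ in Theorem~\ref{thm-constr}. So there is no proof in the paper to match your proposal against, and your text is in any case a programme rather than a proof. Within that programme there are several concrete errors. First, you have the difficulty of the two inequalities inverted: the upper bound $\dim X_i \le 2n-i$ is the \emph{known} half (it is proved in \cite{VoisinHKcoisotp} by precisely the Mumford/Bloch--Srinivas argument you sketch, and for $i=n$ it is immediate from Proposition-Definition~\ref{pro-isoccs}, since every irreducible component of the closure of a rational orbit is a constant cycle subvariety, hence isotropic, hence of dimension $\le n$). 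The genuinely open content of the conjecture is the lower bound, i.e.\ the existence of enough points with large orbits.

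Second, several of your auxiliary concerns are vacuous or dimensionally impossible. There is no ``$i>n$'' case: by the isotropy argument just mentioned no rational orbit has dimension $>n$, so $X_{n+1}=\emptyset$ and the worry that $\gS_{\pi,H}$ might ``accidentally lie inside $X_{n+1}$'' does not arise (the conjecture is posed for $0\le i\le n$, as the surrounding discussion in the introduction makes clear). For $i<n$, varying $H$ (or the fibration) produces at most a countable union of $n$-dimensional constant cycle subvarieties, which can never have dimension $2n-i>n$; the expected witnesses for $i<n$ are codimension-$i$ coisotropic subvarieties swept out by $i$-dimensional constant cycle subvarieties, a different construction not carried out in this paper. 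Finally, you misattribute the role of Theorem~\ref{thm-RCbase}: it is not used to control $\dim\gS_{\pi,H}$ or its orbits, but to show via Corollary~\ref{cor-classfiblag} that the class $[F]\in\CH^n(X)$ of a Lagrangian fiber is independent of $F$ and proportional to $L^n$, which is what lets one identify the class of a point of $\gS_{\pi,H}$ with a multiple of $H^n\cdot L^n$.
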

We refer to~\cite{VoisinHKcoisotp} for more details on Voisin's circle of ideas for studying the splitting property of the Bloch-Beilinson filtration on $\CH_0(X)$. Note that by~\cite[Theorem 1.3]{VoisinHKcoisotp}, we always have $S_iX \le 2n-i$. So when $i = n$, Conjecture~\ref{conjccs} is equivalent to the existence of constant cycle subvarieties of $X$ of dimension $n$ (which are necessarily Lagrangian, by Roitman-Mumford's theorem~\cite[Proposition 10.24]{VoisinII}) and one would expect to recover the conjectural canonical zero-cycle for any projective hyper-Kähler manifold $X$ by taking the class of a point in any of these constant cycle Lagrangian subvarieties, hence the second generalization of $o_S$.

In general it is difficult to construct Lagrangian constant cycle subvarieties. However, if $X$ admits a Lagrangian fibration $\pi : X \to B$ (\emph{i.e.} a surjective holomorphic map to a projective variety $B$ whose general fiber is a connected Lagrangian submanifold), we prove  in Section~\ref{sec-ex} the following theorem, which proves in particular Conjecture~\ref{conjccs} in the case $i = n$ for every Lagrangian fibration. 

\begin{thm}\label{thm-constr}
Let $X$ be a projective hyper-Kähler manifold admitting a Lagrangian fibration $\pi : X \to B$. For each divisor class $H \in \Pic(X)$ whose restriction to some smooth Lagrangian fiber is ample, there exists a Lagrangian constant cycle subvariety $\gS_{\pi,H} \subset X$ dominating $B$ and all of whose points are rationally equivalent to $\frac{1}{\deg H^n \cdot [F]} \cdot H^n \cdot [F]$ in $\CH_0(X)$\footnote{Since $h^{1,0}(X) = 0$, by Roitman's theorem~\cite{Roitmantor} $\CH_0(X)$ is torsion free. Thus $\frac{1}{\deg H^n \cdot [F]} \cdot H^n \cdot [F]$ is well-defined.} where $[F]$ the class of a fiber of $F$ of $\pi$.
\end{thm}

The fact that $ [F] \in \CH^n(X)$ is independent of $F$ is a direct consequence of the following general result. 

\begin{thm}\label{thm-RCbase}
Let $X$ be a  Calabi-Yau manifold and $f : X \dto B$ a dominant meromorphic map over a Kähler base $B$. If $0 < \dim B < \dim X$, then $B$ is rationally connected.
\end{thm}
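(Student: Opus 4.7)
The proof will proceed by resolving the meromorphic map, recognizing the result as a Calabi-Yau fibration, and then arguing by contradiction using the MRC quotient of $B$ together with the canonical bundle formula.

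First, I resolve $f$: pick a proper modification $\sigma \colon \tilde X \to X$ such that $\pi \colonec f\circ\sigma$ is a morphism $\tilde X \to B$. After replacing $B$ by a resolution of singularities if necessary, and applying the Stein factorization to $\pi$ (noting that the image of a rationally connected variety under a surjective map is rationally connected, so it suffices to prove the claim for the Stein base), I may assume that $\pi$ is a surjective morphism with connected fibers between smooth compact K\"ahler manifolds. Since $\sigma$ is a birational morphism of smooth K\"ahler manifolds and $K_X \equiv 0$, the canonical divisor $K_{\tilde X}$ is effective and $\sigma$-exceptional; hence by adjunction, a general fiber $F$ of $\pi$ (which avoids the $\sigma$-exceptional locus) satisfies $K_F = K_{\tilde X}|_F \equiv 0$, so $\pi$ is a Calabi-Yau fibration.

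Now suppose, toward a contradiction, that $B$ is not rationally connected. By Campana's construction of the MRC quotient in the compact K\"ahler category, there is a dominant meromorphic map $B\dashrightarrow B'$ onto a non-uniruled compact K\"ahler manifold $B'$ of dimension strictly between $0$ and $\dim X$; by the K\"ahler form of the Boucksom-Demailly-Paun-Peternell theorem, $K_{B'}$ is pseudo-effective. Composing, resolving, and Stein-factorizing once more yields a surjective morphism with connected fibers $\pi' \colon \tilde X' \to B'$ still of Calabi-Yau fibration type (since its general fiber is itself fibered by Calabi-Yau fibers of $\pi$ over a rationally connected base). The canonical bundle formula for Calabi-Yau fibrations (Kawamata, Fujino-Mori in the projective case, extended to the K\"ahler setting by Ambro, Fujino and Cao-H\"oring) then gives a $\mathbb{Q}$-linear equivalence
\[
K_{\tilde X'} \sim_{\mathbb{Q}} (\pi')^*(K_{B'} + D + M),
\]
with $D\ge 0$ the discriminant divisor and $M$ a pseudo-effective moduli divisor. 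Combined with the effectivity and $\sigma$-exceptionality of $K_{\tilde X'}$, pseudo-effectivity of $K_{B'}$ and $M$, and effectivity of $D$, comparing numerical classes forces each of $K_{B'}$, $D$, $M$ to be numerically trivial; in particular $B'$ would be a compact K\"ahler Calabi-Yau manifold of intermediate dimension dominated by the irreducible Calabi-Yau $X$.

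The final contradiction, and the hardest point of the argument, comes from ruling out the existence of such a dominant meromorphic map via the Beauville-Bogomolov decomposition of $X$: one must argue that a dominant meromorphic map from an irreducible Calabi-Yau in the wider sense onto a strictly smaller-dimensional Calabi-Yau would force a non-trivial product factor in the decomposition of a finite \'etale cover of $X$, contradicting irreducibility. This step is delicate in the non-projective K\"ahler setting, both because the canonical bundle formula with the needed pseudo-effectivity of the moduli part is more subtle than in the projective case, and because tracking the holonomy decomposition across a meromorphic map requires care.
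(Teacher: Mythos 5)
Your proposal diverges from the paper's argument and, more importantly, it stops short of proving the theorem at exactly the point where the real difficulty lies. After the reduction via the MRC quotient and the canonical bundle formula, you arrive at the claim that an irreducible Calabi--Yau manifold $X$ cannot dominate a $K$-trivial compact K\"ahler manifold $B'$ with $0 < \dim B' < \dim X$, and you then only \emph{assert} that this follows from the Beauville--Bogomolov decomposition ``with care.'' That assertion is not an argument: the decomposition theorem describes an \'etale cover of $X$ itself and says nothing a priori about images of dominant meromorphic maps out of $X$, so there is no mechanism in what you wrote that produces the ``non-trivial product factor'' you invoke. This missing step is essentially the content of the theorem. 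The paper closes it (and in fact handles the more general case of a non-uniruled base directly, without any canonical bundle formula) by a completely different mechanism: $K_B$ pseudo-effective yields a non-zero map $L \to \Omega_X^{k}$ from a line bundle with $c_1(L)$ pseudo-effective, hence of non-negative slope; Yau's theorem plus Donaldson--Uhlenbeck--Yau make $\Omega_X^{k}$ polystable of slope $0$ with stable summands given by the irreducible $\Hol(X)$-submodules; the representation theory of $\SU(n)$ and $\Sp(n/2)$ shows every summand has rank $>1$ except, in the even-$k$ hyper-K\"ahler case, the line $\eta^{k/2}\cdot\cO_X$, which is excluded because a $k$-form pulled back from a $k$-dimensional base is degenerate while $\eta^{k/2}$ is not. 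Some such input from the special holonomy is unavoidable, and your proposal defers it entirely.

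There are two further soft spots. First, you invoke ``the K\"ahler form of the Boucksom--Demailly--P\u{a}un--Peternell theorem'' for $B'$; BDPP is a theorem for projective manifolds, and its K\"ahler analogue is not something you can cite as known. The paper sidesteps this by first observing that $B$ carries no non-zero holomorphic $2$-form (its pullback to $X$ would be a degenerate holomorphic $2$-form, impossible for irreducible holonomy), so $B$ is projective and the projective BDPP applies. Second, the canonical bundle formula with a pseudo-effective (let alone nef or effective) moduli part for K-trivial fibrations in the compact K\"ahler category is delicate machinery that you acknowledge but do not justify, and the step ``comparing numerical classes forces $K_{B'}$, $D$, $M$ to be numerically trivial'' needs an actual negativity/pushforward argument since $K_{\tilde X'}$ is only exceptional over $X$, not zero. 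None of these is fatal on its own, but together with the unproved final step the proposal does not constitute a proof.
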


Here a  \emph{Calabi-Yau manifold} is an irreducible (in the sense of Riemannian geometry) compact Kähler manifold with finite fundamental group and trivial canonical bundle. The Riemannian holonomy group of a Calabi-Yau manifold associated to its Kähler metric is either $\SU(n)$ or $\Sp(n)$. Hyper-Kähler manifolds and Calabi-Yau manifolds in the strict sense are examples of Calabi-Yau manifolds. Strictly speaking, we will only apply Theorem~\ref{thm-RCbase} to projective Calabi-Yau manifolds and in this case, Theorem~\ref{thm-RCbase} is already known to be true by~\cite[Theorem 14]{Kollar-Larson}. However our proof is different from the proof of~\cite[Theorem 14]{Kollar-Larson} and works also for non-projective Calabi-Yau manifolds.

Theorem~\ref{thm-RCbase} will allow to rephrase Theorem~\ref{thm-constr} replacing the fiber $F$ by the cycle $L^n \in \CH^n(X)$. In the case where $\dim X = 4$, under the mild assumption that a very general projective deformation of the Lagrangian fibration $\pi : X \to B$ with $\rho(X) \ge 3$ satisfies Matsushita's conjecture\footnote{
We say that a Lagrangian fibration $\pi : X \to B$ satisfies Matsushita's conjecture if either $\pi : X \to B$ is isotrivial or the induced moduli map $B \dto \cA_n$ to some suitable moduli space of abelian varieties is generically injective. } (for instance when $b_2(X) \ge 8$~\cite{VoisinLag}), Theorem~\ref{thm-constr} allows us to define for such a variety $X$ a canonical zero-cycle $o_X \in \CH_0(X)$ by taking the class of a point supported on any Lagrangian constant cycle subvariety $\gS_{\pi,H}$ defined above, whose class is also proportional to the product of any 4 divisors:
 
 \begin{thm}\label{thm-indep} 
 \hfill
 \begin{enumerate}[i)]
 \item  Let $f : X \to B$ be a projective hyper-Kähler manifold admitting a Lagrangian fibration of dimension $2n$. Let $L \colonec f^*c_1(\cO_B(1))$ where $\cO_B(1)$ is an ample line bundle on $B$
 and let $D$ be a divisor in $X$. Assume that $[D]_{|F} \ne 0 \in H^2(F,\bQ)$ for some fiber $F$ of $\pi$ or $\dim X = 4$, then the zero-cycle $L^n \cdot D^n$ is proportional to the class of a point $x \in X$ which belongs to a Lagrangian constant cycle subvariety constructed in Theorem~\ref{thm-constr}.
\item Assume that $\dim X = 4$. If a very general projective deformation of $X$ preserving  the Lagrangian fibration with $\rho(X) \ge 3$ satisfies Matsushita's conjecture (in particular if $b_2(X) \ge 8$~\cite{VoisinLag}), then the class of a point in $\gS_{\pi,H}$ modulo rational equivalence is independent of the divisor class $H$.
\item Under the same hypothesis as in $ii)$, the class of a point in $\gS_{\pi,H}$ modulo rational equivalence is independent of the Lagrangian fibration.
\end{enumerate}
 \end{thm}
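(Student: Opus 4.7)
The plan is to use Theorem~\ref{thm-RCbase} and Theorem~\ref{thm-constr} to establish part i), and then to reduce parts ii) and iii) to a (conditional) weak splitting property for top products of $2n$ divisor classes via a deformation-and-specialization argument.

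For part i), I first apply Theorem~\ref{thm-RCbase} to $\pi \colon X \to B$: since $0 < \dim B = n < 2n = \dim X$, the base $B$ is rationally connected, so $\CH_0(B)_\bQ = \bQ$. Writing $L = \pi^* A$ for an ample class $A$ on $B$, the $0$-cycle $A^n \in \CH_0(B)_\bQ$ is proportional to any point class, and pulling back yields $L^n = c \cdot [F]$ in $\CH^n(X)_\bQ$ for some $c \in \bQ^\times$, with $F$ a smooth Lagrangian fiber. When $H$ is ample on some smooth Lagrangian fiber, Theorem~\ref{thm-constr} then shows every point of $\gS_{\pi, H}$ is rationally equivalent to a rational multiple of $H^n \cdot [F] = c^{-1} L^n \cdot H^n$. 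For a general divisor class $H$, I would use that $H + tH_0$ is ample on fibers for $t$ large (where $H_0$ is some fixed class ample on fibers), apply the previous case to $H + tH_0$, and extract $L^n \cdot H^n$ from the polynomial in $t$ given by $L^n \cdot (H + tH_0)^n$ via a polarization-type identity together with the rigidity of the canonical $0$-cycle class on an abelian Lagrangian fiber.

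For part ii), by part i) the class of a point in $\gS_{\pi, H}$ coincides in $\CH_0(X)_\bQ$ with $(L^n \cdot H^n)/d_H$, where $d_H = (H|_F)^n$ is the fiberwise degree. Hence independence on $H$ is equivalent to the statement that $(L^n \cdot H^n)/d_H$ is independent of $H$ in $\CH_0(X)_\bQ$, which is precisely an instance of Beauville's weak splitting property for top products of divisor classes. To establish this I would deform the polarized Lagrangian fibration to a very general projective $\pi' \colon X' \to B'$ with $\rho(X') \ge 3$ satisfying Matsushita's conjecture by hypothesis; on $X'$, the rigidity of the moduli map provided by Matsushita's conjecture, combined with known techniques for weak splitting in Lagrangian fibrations, yields the required proportionality in $\CH_0(X')_\bQ$. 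Specialization back to $X$ preserves rational equivalence and concludes.

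For part iii), the same strategy applies to two Lagrangian fibrations $\pi_1, \pi_2$ on $X$ with line bundles $L_1, L_2$: by parts i) and ii) one reduces to showing that $L_1^n \cdot H^n$ and $L_2^n \cdot H^n$ are proportional to the same zero-cycle class in $\CH_0(X)_\bQ$ for some common divisor class $H$, which follows again by deformation to a very general projective member for which Matsushita's conjecture holds and the desired weak splitting can be established. The main obstacle in parts ii) and iii) is precisely this deformation-and-specialization step: transferring a rational-equivalence statement from the very general deformation back to $X$ is delicate, and rests on the rigidity of the modular structure on the Lagrangian fibers forced by Matsushita's conjecture, which provides enough constraints on the Chow groups of the nearby fibers to propagate the equality under specialization.
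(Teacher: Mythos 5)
Your proposal contains genuine gaps at each of the three steps, and the missing ingredient is the same in all of them: Lemma~\ref{lem-matsu}, i.e.\ the fact that the restriction map $j^*\colon H^2(X,\bQ)\to H^2(F,\bQ)$ has rank-one image. For part i), your plan to treat a general $H$ by making $H+tH_0$ fiber-ample and then ``polarizing'' does not close: inverting the Vandermonde system expresses $L^n\cdot H^n$ as a \emph{linear combination} of the point classes $[x_{t_i}]$ supported on the \emph{different} subvarieties $\gS_{\pi,H+t_iH_0}$, and these classes are a priori not proportional to one another --- proving that they are is exactly part ii), which requires the extra Matsushita hypothesis that part i) does not assume. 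The ``rigidity of the canonical $0$-cycle on an abelian fiber'' you invoke is false as stated: on an abelian variety the point $x$ with $h^g=D[x]$ genuinely depends on $h$. The paper instead uses the rank-one property to set up a dichotomy: either $H_{|F}$ is cohomologous to zero, in which case $H^n\cdot L^n=0$ by~\cite[Theorem~0.9]{VoisinMW}, or $\pm H_{|F}$ is automatically ample and Theorem~\ref{thm-constr} together with \eqref{eqn-suppcan} and Corollary~\ref{cor-classfiblag} applies directly.

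For parts ii) and iii) you have black-boxed the entire content into ``known techniques for weak splitting in Lagrangian fibrations.'' Unconditionally, weak splitting for top products of divisors is open; the whole point is the concrete reduction. The paper's mechanism is: by the rank-one property choose $\ga,\gb\neq0$ with $(\ga H_1-\gb H_2)_{|F}$ cohomologous to zero, apply~\cite[Theorem~0.9]{VoisinMW} (this is precisely where the hypothesis on very general deformations satisfying Matsushita's conjecture enters --- it is the hypothesis of Voisin's theorem, not a deformation-and-specialization argument you need to run yourself) to get $\ga H_1\cdot L^n=\gb H_2\cdot L^n$ in $\CH^\bullet(X)\otimes\bQ$, and then factor $\ga^nH_1^n\cdot L^n-\gb^nH_2^n\cdot L^n$ through $(\ga H_1-\gb H_2)\cdot L^n$. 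For iii) you also miss an essential non-degeneracy step: the chain of proportionalities $\gl_1H^n\cdot L^n={L'}^n\cdot L^n=\gl_2H^n\cdot{L'}^n$ is vacuous unless $\deg({L'}^n\cdot L^n)\neq0$, which the paper deduces from $q(L,L')\neq0$ using the signature $(1,\rho(X)-1)$ of the Beauville--Bogomolov form on $\NS(X)_\bQ$ together with $q(L)=q(L')=0$. Without that argument the proof of iii) fails.
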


\section{Base variety of rationally fibered Calabi-Yau manifolds}\label{sec-RCbase}

 We will prove Theorem~\ref{thm-RCbase} in this section.

\begin{proof}[Proof of Theorem~\ref{thm-RCbase}]

Up to a bimeromorphic modification, we suppose that $B$ is smooth. If $B$ has a non-trivial holomorphic $2$-form $\ga$, then $2 \le \dim B < \dim X$ and $f^*\ga \ne 0$ is degenerate, contradicting the Calabi-Yau assumption. Thus $B$ is projective.

By Graber-Harris-Starr's theorem~\cite{GHS}, it suffices to show that if $B$ satisfies the condition in Theorem~\ref{thm-RCbase}, then $B$ is uniruled. Indeed, suppose that $B$ is not rationally connected and let $B \dto B'$ be the MRC-fibration of $B$, then the composition $X \dto B \dto B'$ is dominant with $0 < \dim B' < \dim X $. So $B'$ would be uniruled, contradicting~\cite[Corollary$1.4$]{GHS}.

Now suppose that $B$ is not uniruled. By~\cite{BDPP}, the canonical class $c_1(K_B)$ is pseudo-effective. This means that the class $c_1(K_B) \in H^2(B,\bR)$ is a limit of effective divisor classes. Let $X \xleftarrow{p} \tilde{X} \xrightarrow{q}  B$ be a resolution of $f : X \dto B$ with $\tilde{X}$ smooth. As $p_*q^*$ maps effective divisor classes to effective divisor classes, the class $p_*q^*c_1(K_B)$ is also pseudo-effective.

Since $q : \tilde{X} \to B$ is surjective, the induced map $q^*K_B \to \gO_{\tilde{X}}^k$ is non-zero where $k = \dim B$. As $X$ is smooth, this map determines a non-zero morphism $L \to \gO_X^{k}$ where $L$ is a line bundle such that $c_1(L) =p_*q^*c_1(K_B)$.

Let $\go$ be a Kähler form in $X$. Since the holonomy group $\Hol(X)$ of $X$ with respect to the Kähler metric  is either $\SU(n)$ or $\Sp(n/2)$ where $n = \dim X$, there exists a Kähler-Einstein metric on $T_X$ whose corresponding  Kähler form is cohomologous to $\go$~\cite{Yau}, which further implies that $\gO_X^{k}$ is $\go$-polystable by the Donaldson-Uhlenbeck-Yau theorem~\cite{DonaldsonUYsurface, UhlenbeckYau}.  Precisely, $\gO_X^{k} =  E_1 \oplus \cdots \oplus E_m$ where $E_i$ is $\go$-stable of slope $\mu_\go(E)=0$ and is defined as the parallel transport of each summand in the decomposition of the $\Hol(X)$-module ${\gO_X^{k}}_{|x}$ into irreducible representations over any $x \in X$ described as follows (\emph{cf.}~\cite[\S$13$, $n^o$ $1$ and $3$]{BourbakiLie8} or in~\cite[Chapter VI.3]{BrockerDieck}).

\begin{lem} \hfill
\begin{enumerate}[i)]
\item If $\Hol(X) = \SU(n)$, then the $\Hol(X)$-module ${\gO_X^{k}}_{|x}$ is irreducible.
\item If $\Hol(X) = \Sp(n/2)$, then 
$${\gO_X^{k}}_{|x} = \bigoplus_{k \ge k - 2r \ge 0} \eta_{|x}^r \wedge P^{k-2r},$$
where $\eta$ is a holomorphic symplectic 2-form on $X$ and, $P^{k-2r}$ is an irreducible $\Sp(n/2)$-submodule of $\gO_{X|x}^{k-2r}$. Moreover, 
$$\dim_\bC P^{k-2r} = \binom{2n}{k-2r} - \binom{2n}{k-2r - 2}.$$
\end{enumerate}
\end{lem}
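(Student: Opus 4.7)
The plan is to verify both parts via the highest-weight classification of irreducible representations of the compact classical groups $\SU$ and $\Sp$. Writing $V = T_{X,x}^{1,0}$, each assertion concerns the standard representation of the holonomy group on exterior powers of $V^*$.

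For part (i), since $\SU(n)$ is a compact real form of $\SL(n,\bC)$, complex-linear $\SU(n)$-invariant subspaces of $\wedge^k V^*$ coincide with $\SL(n,\bC)$-invariant ones, so irreducibility may be tested on the complexification. Relative to the standard Borel, the vector $e_1^* \wedge \cdots \wedge e_k^*$ is a highest-weight vector of weight $\omega_k$, the $k$-th fundamental weight in type $A_{n-1}$. A direct check shows that every weight of $\wedge^k V^*$ lies in the Weyl orbit of $\omega_k$ with multiplicity one, and irreducibility then follows from the standard classification of irreducible representations of a semisimple complex Lie group by highest weight.

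For part (ii), the key point is that the $\Sp$-invariant holomorphic symplectic form $\eta \in \wedge^2 V^*$ induces an $\Sp$-equivariant Lefschetz-type operator $L := \eta \wedge (\cdot) : \wedge^k V^* \to \wedge^{k+2} V^*$. Together with the adjoint contraction $\Lambda$ (obtained from the symplectic identification $V^* \simeq V$) and the grading operator $H = [L,\Lambda]$, this forms an $\mathfrak{sl}_2$-triple commuting with the $\Sp$-action on $\wedge^\bullet V^*$. Setting $P^j := \ker L^{\dim_\bC V - j + 1}\big|_{\wedge^j V^*}$, the classification of finite-dimensional $\mathfrak{sl}_2$-modules yields the Lefschetz-type decomposition
$$\wedge^k V^* = \bigoplus_{0 \le r \le k/2} \eta^r \wedge P^{k-2r}.$$
A further highest-weight calculation analogous to (i) identifies each $P^j$ as the fundamental representation of $\Sp$ of highest weight $\omega_j$ in type $C$, hence irreducible. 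The dimension formula follows from the injectivity of $L$ on $\wedge^{j-2} V^*$ below the middle dimension (a direct consequence of the $\mathfrak{sl}_2$-theory), via the short exact sequence
$$0 \to \wedge^{j-2} V^* \xrightarrow{L} \wedge^j V^* \to P^j \to 0,$$
so that $\dim P^j = \dim \wedge^j V^* - \dim \wedge^{j-2} V^*$ gives the stated formula by subtraction of binomial coefficients.

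The only real subtlety is verifying that the triple $(L,\Lambda,H)$ is genuinely $\Sp$-equivariant and that the primitive pieces $P^j$ are irreducible as $\Sp$-modules rather than only as $\mathfrak{sl}_2$-summands; both points are classical, and are exactly what the cited references in Bourbaki and Br\"ocker--Dieck record. No deeper obstacle is expected.
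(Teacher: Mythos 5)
The paper offers no proof of this lemma at all: it is stated as a known fact with pointers to Bourbaki (Lie VIII, \S 13) and Br\"ocker--tom Dieck (Ch.\ VI.3). Your sketch is essentially the standard argument that underlies those citations --- minuscule highest-weight theory in type $A$ for (i), and the symplectic Lefschetz $\mathfrak{sl}_2$-decomposition plus identification of the primitive pieces with the fundamental representations $V(\omega_j)$ in type $C$ for (ii) --- so in substance you are supplying the proof the paper outsources. Two points need repair, though. First, your definition of the primitive part has the wrong exponent: with $\dim_\bC V = N$ and symplectic rank $g = N/2$, one has $P^j = \ker\bigl(L^{\,g-j+1}\colon \wedge^j V^* \to \wedge^{2g-j+2}V^*\bigr)$, not $\ker L^{N-j+1}$; the latter kernel is strictly larger in general (e.g.\ it is all of $\wedge^2 V^*$ when $N=4$, $j=2$, whereas the primitive part there is $5$-dimensional), and with your exponent the exact sequence $0 \to \wedge^{j-2}V^* \to \wedge^j V^* \to P^j \to 0$ and the dimension count break down. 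Second, the irreducibility of $P^j$ as an $\Sp$-module does \emph{not} follow from ``a highest-weight calculation analogous to (i)'': the argument in (i) works because $\wedge^k$ of the standard representation of $\SL(n,\bC)$ is minuscule (all weights in one Weyl orbit, multiplicity one), but the type-$C$ fundamental representations $P^j$ for $j \ge 2$ have zero weights with multiplicity, so one genuinely needs Weyl's construction or the classification of fundamental representations here. You flag this as the ``only real subtlety'' and defer it to the same references the paper cites, which is acceptable, but the phrasing suggests it is routine when it is in fact the only nontrivial content of part (ii). Finally, note a normalization mismatch: with $\Hol(X)=\Sp(n/2)$ acting on the $n$-dimensional space ${\gO_X^1}_{|x}$, your (correct) subtraction gives $\dim_\bC P^{j} = \binom{n}{j} - \binom{n}{j-2}$; the paper's $\binom{2n}{\cdot}$ corresponds to writing $\dim X = 2n$, so be explicit about which convention you are using.
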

By the above lemma, if  $\Hol(X) = \Sp(n/2)$ and $k$ is odd or $\Hol(X) = \SU(n)$, then  $\dim E_i > 1 = \rank (L)$ for all $i$ (since $0 < k <n$). As $c_1(L)$ is pseudo-effective (so $\mu_\go(L) \ge 0 = \mu_\go(E)$) and $E_i$ is stable, there is no non trivial morphism from $L$ to $E_i$ for all $i$, contradicting  the non-vanishing of $L \to \gO_X^{k}$.
   Finally if $\Hol(X) = \Sp(n/2)$ and if $k$ is even, then $m \ge 2$ and there exists exactly one $i$ such that $\rank(E_i) = 1$. Moreover, $E_i \simeq \cO_X$ and $E_i \hookrightarrow \gO_X^k$ is given by the multiplication by $\eta^{k/2}$. We deduce that if $U$ is a Zariski open subset of $X$ restricted to which $f$ is well-defined, then locally the pullback under $f_{|U}$ of a non-zero holomorphic $k$-form $\ga$ on $f(U)$ is proportional to $\eta^{k/2}$, which contradicts the fact that $\eta$ is non-degenerate. 
\end{proof}

As an immediate consequence,
\begin{cor}\label{cor-classfiblag}
The class of a fiber $\pi^{-1}(t)$ of a Lagrangian fibration $\pi : X \to B$ modulo rational equivalence is independent of $t \in B$. In particular, there exists $\mu \in \bZ \bss\{0\}$ such that $L^n = \mu [F]$ in $\CH^n(X)$ where $L \colonec \pi^*c_1(\cO_B(1))$ for some ample divisor $\cO_B(1)$ over $B$ and $[F]$ is the class of any fiber of $\pi$.
\end{cor}

\section{Construction of constant cycles subvarieties on Lagrangian fibrations}\label{sec-ex}

Let $X$ be a variety. A Zariski locally closed subset $Y$ of $X$ is called \emph{constant cycle} if all points in $Y$ are rationally equivalent in $X$. Note that the property of being constant cycle for a subvariety is \emph{birational} in the following sense:

\begin{lem}\label{lem-openconst}
A subvariety $Y$ of $X$ is constant cycle if and only if there exists a Zariski open subset $U$ of $Y$ such that all points in $U$ are rationally equivalent in $X$.
\end{lem}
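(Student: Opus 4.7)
The forward direction is trivial: take $U = Y$ (the statement tacitly assumes $U$ to be non-empty, without which the converse has no content). The substance lies in the converse. Assume $U \subset Y$ is a non-empty Zariski open subset all of whose points are rationally equivalent in $X$. Fix $x_0 \in U$, so that $[x] = [x_0]$ in $\CH_0(X)$ for every $x \in U$. The plan is then to show $Y \subset O_{x_0}$, where $O_{x_0} = \{x \in X : [x] = [x_0]\}$ is the rational orbit of $x_0$.

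The key input is the structural fact already recalled in the footnote preceding Conjecture~\ref{conjccs}: $O_{x_0}$ is a countable union of Zariski closed subsets of $X$, say $O_{x_0} = \bigcup_{i \in \bN} F_i$. (This comes from the fact that rational equivalences between pairs of points of $X$ are parametrized by the countably many components of a Hilbert scheme of curves in $X$ with two marked points, each component being of finite type.) The inclusion $U \subset O_{x_0}$ then yields $U = \bigcup_i (F_i \cap U)$, an at-most-countable union of Zariski closed subsets of $U$. Since $U$ is non-empty and irreducible (as $Y$ is so) and is defined over the uncountable field $\bC$, it cannot be exhausted by countably many proper Zariski closed subsets --- a standard consequence of Baire category applied to its Euclidean topology, in which proper Zariski closed subsets are nowhere dense. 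Hence some $F_i$ satisfies $F_i \cap U = U$, i.e. $U \subset F_i$. Because $F_i$ is Zariski closed in $X$ and $U$ is Zariski dense in $Y$, this gives $Y \subset F_i \subset O_{x_0}$, as desired.

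The only delicate point is the structural description of $O_{x_0}$ as a countable union of Zariski closed subsets of $X$; once this is granted, the remaining Baire-category step is formal. The argument uses only the irreducibility of $Y$ and makes no smoothness or projectivity assumption, which is consistent with the ``birational'' character of the property asserted in the sentence preceding the statement.
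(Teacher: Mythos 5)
Your argument is correct, but it is genuinely different from the one in the paper. The paper's proof is a one-liner resting on the moving fact that every zero-cycle on $Y$ is rationally equivalent to a zero-cycle supported in the dense open subset $U$: writing $[y]=\sum_i n_i[u_i]$ with $u_i\in U$ and pushing forward to $X$, all the $[u_i]$ collapse to $[x_0]$ and a degree count forces $\sum_i n_i=1$, whence $[y]=[x_0]$. You instead invoke the structural description of the rational orbit $O_{x_0}$ as a countable union of Zariski closed subsets $F_i$ of $X$ (the fact recalled in the footnote before Conjecture~\ref{conjccs}) and run a Baire-category argument to find a single $F_i$ containing $U$, hence containing $Y=\overline{U}$. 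Both routes are sound; you have correctly identified that the only delicate input is the countable-union structure of $O_{x_0}$, and the irreducibility of $Y$ is indeed all you need beyond that. The trade-off is that your proof uses heavier machinery (Chow/Hilbert-scheme parametrization of rational equivalences) and depends essentially on the base field being uncountable, since the Baire step fails over a countable algebraically closed field, whereas the paper's moving argument is elementary, works over any field, and produces the identity $[y]=[x_0]$ directly rather than through the orbit. On the other hand, your approach fits naturally with the ``rational orbit'' viewpoint used throughout the paper and shows the slightly stronger fact that $Y$ is contained in a single Zariski closed piece of $O_{x_0}$.
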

\begin{proof}[Proof]
This follows from the fact that every zero-cycle in $Y$ is rationally equivalent to a zero-cycle supported in $U$.
\end{proof}

\begin{lem}\label{lem-Qcar}
Let $Y \subset X$ be a connected Zariski locally closed subset. If the image of the Gysin map $i_*  : \CH_0(Y)_\bQ \to \CH_0(X)_\bQ$ is generated by an element $o_Y$ in $\CH_0(X)_\bQ$, then $Y$  is  constant cycle. In this case, we say that $Y$ is \emph{represented by the zero-cycle $o_Y$}.  If $h^{1,0}(X) = 0$ (e.g. when $X$ is a projective hyper-Kähler manifold), the same conclusion holds without $Y$ being connected.
\end{lem}

\begin{proof}[Proof]
It suffices to show that if every point supported on $Y$ is torsion in $\CH_0(X)$, then $Y$ is a constant cycle subvariety. Let 
$$\ga : Y \hookrightarrow X \to \Alb(X)$$
be the composition of the inclusion map $Y \hookrightarrow X$ with the Albanese map $X \to \Alb(X)$. If the image of $i_* : \CH_0(Y) \to \CH_0(X)$ consists of torsion classes, then by Roitman's theorem~\cite{Roitmantor} the map $\ga$ factorizes through $\CH_0(X)_\tors \simeq \Alb(X)_\tors \hookrightarrow \Alb(X)$ \emph{via} the cycle class map $Y \to \CH_0(X)_\tors$. If $Y$ is connected or $\dim  \Alb(X) = h^{1,0}(X) = 0$, then $\ga$ is constant, hence $Y \to \CH_0(X)_\tors \hookrightarrow \CH_0(X)$ is constant. 
\end{proof}

Now we restrict ourselves to constant cycle subvarieties on projective hyper-Kähler manifolds. Let $X$ be a projective hyper-Kähler manifold of dimension $2n$ and let $\eta$ be a holomorphic symplectic $2$-form on $X$. The following result is a direct consequence of Mumford-Roitman's theorem~\cite[Proposition $10.24$]{VoisinII}:

\begin{pro}\label{pro-isoccs}
If $Y$ is a constant cycle subvariety of $X$, then $Y$ is isotropic for $\eta$. In particular, $\dim Y \le n$ and if $\dim Y = n$, then $Y$ is a Lagrangian constant cycle subvariety.
\end{pro}

The rest of Section~\ref{sec-ex} is devoted to the proof of Theorem~\ref{thm-constr} and Theorem~\ref{thm-indep}.

\begin{proof}[Proof of Theorem~\ref{thm-constr}]
First we  prove the following

\begin{lem}\label{Abfini}
Let $A$ be an abelian variety of dimension $g$ and $H$ an ample divisor on $A$. Let $D \colonec \deg (H^g)$ and choose $o \in A$ to be the origin of $A$ with respect to which $H$ is symmetric. Then for every $x \in A$ the following holds: $H^g = D[x]$ in $\CH_0(A)$ if and only if $x$ is a $D$-torsion point.  In particular, there exist exactly $D^{2g}$ points $x \in A$ such that $H^g = D[x]$ in $\CH_0(A)$.
\end{lem}

\begin{proof}[Proof]

Since $H$ is symmetric with respect to $o$, by Poincaré's  formula~\cite[Corollary $16.5.7$]{BirkLange} $H^g = D[o]$ in $\CH_0(A)$. Let $\alb : A \xrightarrow{\sim} \Alb(A)$ be the Albanese map of $A$ with respect to $o$.  Recall that $\alb$ factorizes through the Deligne cycle class map $\ga : \CH_0(A)_{\hom} \to \Alb(A)$, where $\CH_0(A)_{\hom}$ denotes the subgroup of $\CH_0(A)$ homologous to zero and the morphism $A \to \CH_0(A)_{\hom}$ is given by  $x \mapsto [x] - [o]$. If $H^g = D[x]$ in $\CH_0(A)$, then 
\begin{equation}\label{eqn-albtrans}
D \cdot \alb(x) = \ga(D[x] - D[o]) = \ga(H^g - D[o]) = o.
\end{equation}
in $\Alb(A)$. Conversely if $D \cdot \alb(x) = o$, then $\ga ([x] - [o])$ is $D$-torsion. Since the restriction of $\ga$ to the torsion part of $\CH_0(A)_{\hom}$ is an isomorphism onto the torsion part of $A$~\cite{BlochTorsRoitman, Roitmantor}, we conclude that $[x] - [o]$ is also $D$-torsion. Hence $D[x] = D[o] = H^g$ in $\CH_0(A)$. 
\end{proof}

Let $U \subset B$ be a Zariski open subset of $B$ parametrizing smooth fibers of $\pi$ such that $H_{|\pi^{-1}(b)}$ is ample for any $b \in U$. Set $X_U \colonec \pi^{-1}(U)$. By a standard argument (see for example the proof of~\cite[Theorem $10.19$]{VoisinII}), the relative Hilbert scheme $\mathcal{H}$ over $U$, parametrizing the data of a point $t$ in $U$ and a point $x \in X_t$ such that $D[x_t] = H^g_{|X_t}$ in $\CH_0(X_t)$, is a countable union of irreducible subvarieties of $X_U$.

Let $p : \mathcal{H} \to U$ be the natural projection. Since the $X_t$'s are abelian varieties, $p$ is finite and dominant by Lemma~\ref{Abfini}, so there exists an irreducible component $Z$ of $\cH$ such that ${p}_{|Z}$ is finite and dominant as well. By construction, viewing $Z$ as a subvariety of $X_U$, $Z$ is Zariski locally closed in $X$ of dimension $n$; we define $\gS_{\pi,H}$ as the closure of $Z$ in $X$, which is also of dimension $n$. Finally, for every $x \in Z$, let $j:X_t \hookrightarrow X$ be the inclusion of the fiber of $\pi$ containing $x$, then
$D[x] = (j^*H)^n$ in $\CH_0(X_t)$ thus 
\begin{equation}\label{eqn-suppcan}
D[x] = H^n \cdot [F] = \frac{1}{\deg c_1(\cO_B(1))} H^n \cdot \pi^*c_1(\cO_B(1))^n,
\end{equation} 
for some ample line bundle $\cO_B(1)$ over $B$ where the last equality follows from Corollary~\ref{cor-classfiblag}. Hence $Z$ is a Zariski open subset $U$ of $\gS_{\pi,H}$ whose points are rationally equivalent to a scalar multiple of $H^n \cdot \pi^*c_1(\cO_B(1))^n$. We conclude by Lemma~\ref{lem-Qcar} that $U$ is constant cycle, and then by Lemma~\ref{lem-openconst} that $\gS_{\pi,H}$ is a constant cycle subvariety of $X$. 
\end{proof}

Before we start proving of Theorem~\ref{thm-indep},  let us recall the following result of Matsushita and Voisin which will be useful later. Let $\pi : X \to B$ be a Lagrangian fibration and let $L$ be the pullback of an ample divisor class from the base. Let $j: F \hookrightarrow X$ be the inclusion map of a smooth Lagrangian fiber in $X$.

\begin{lem}[Matsushita~\cite{MatsushitaDefoLagFib} + Voisin~\cite{VoisinstabLag}]\label{lem-matsu}
If $j^* : H^2(X,\bQ) \to H^2(F,\bQ)$ denotes the restriction map and $\mu_{[F]} : H^2(X,\bQ) \to H^{2n + 2}(X,\bQ)$ the cup product with $[F]$, then
$$ \ker \mu_{[F]}  = \ker j^* = \ker q(L,\cdot) $$
where $q : \Sym^2H^2(X,\bQ) \to \bQ$ is the Beauville-Bogomolov-Fujiki form associated to $X$. In particular, the image of $j^* : H^2(X,\bQ) \to H^2(F,\bQ)$ is of rank one.
\end{lem}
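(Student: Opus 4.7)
The plan is to establish the two equalities of kernels, after which the rank-one assertion follows by a dimension count. By Corollary~\ref{cor-classfiblag}, $L^n = \mu\,[F]$ in $H^{2n}(X,\bQ)$ for some nonzero integer $\mu$, so I may freely replace $[F]$ by $L^n$ when convenient. The inclusion $\ker j^* \subseteq \ker \mu_{[F]}$ is immediate from the projection formula $\alpha \cdot [F] = j_*(j^*\alpha)$. For $\ker \mu_{[F]} \subseteq \ker q(L,\cdot)$, I fix an ample class $H$ on $X$ (so that $H_{|F}$ is automatically ample on $F$). Polarizing the Fujiki relation $\int_X \gamma^{2n} = c_X\, q(\gamma)^n$ and noting that any matching containing an $L$-$L$ pair contributes zero (since $q(L,L)=0$), I obtain, for all $\alpha, \beta \in H^2(X,\bQ)$,
$$
\int_X L^n\, \alpha\, \beta^{n-1} = C \cdot q(L,\alpha) \cdot q(L,\beta)^{n-1}
$$
for some nonzero constant $C$ independent of $\alpha,\beta$. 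Taking $\alpha=\beta=H$, this gives $\int_X L^n H^n = C\,q(L,H)^n$; on the other hand the same integral equals $\mu\,(H_{|F})^n > 0$, so $q(L,H)\ne 0$. If $\alpha \cdot L^n = 0$, then $\int_X L^n \alpha H^{n-1}=0$, which together with $q(L,H)\ne 0$ forces $q(L,\alpha)=0$.

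The main step is $\ker q(L,\cdot) \subseteq \ker j^*$, and the plan is to combine Fujiki with Hodge--Riemann on the fiber. For $\alpha \in L^\perp \colonec \ker q(L,\cdot)$ and every $k \ge 1$, the same Fujiki computation yields
$$
\int_F (j^*\alpha)^k (j^*H)^{n-k} \;=\; \tfrac{1}{\mu}\int_X L^n \alpha^k H^{n-k} \;=\; 0.
$$
The case $k=1$ says $j^*\alpha$ is $(j^*H)$-primitive on the abelian variety $F$. Since $F$ is Lagrangian, $j^*$ annihilates $H^{2,0}(X) \oplus H^{0,2}(X)$, so $j^*\alpha$ actually lies in $H^{1,1}(F,\bR)$. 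The case $k=2$ then reads $\int_F (j^*\alpha)^2 (j^*H)^{n-2} = 0$, and the Hodge--Riemann bilinear relations on the polarized K\"ahler manifold $(F, j^*H)$ make $\omega \mapsto -\int_F \omega^2 (j^*H)^{n-2}$ positive definite on real primitive $(1,1)$-classes; hence $j^*\alpha = 0$. (For $n=1$ one has $H^2(F,\bQ) \simeq \bQ$ and the $k=1$ identity already gives $j^*\alpha = 0$.)

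Finally, $\ker q(L,\cdot) = L^\perp$ has codimension one in $H^2(X,\bQ)$ by non-degeneracy of $q$, while the image of $j^*$ is nonzero since it contains the ample class $H_{|F}$; thus $j^*$ has image of rank one. The delicate point is the inclusion $\ker q(L,\cdot) \subseteq \ker j^*$: the Fujiki-type identities alone only control the pairings of $j^*\alpha$ with products of restrictions of $H^2(X)$-classes, which need not span $H^{2n-2}(F,\bQ)$. What unlocks the argument is the Lagrangian hypothesis, which forces $j^*\alpha$ into $H^{1,1}(F,\bR)$ so that Hodge--Riemann positivity can be invoked.
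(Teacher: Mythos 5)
Your proof is correct, but it takes a genuinely different route from the paper's. The paper proves the lemma almost entirely by citation: the equality $\ker \mu_{[F]} = \ker j^*$ is quoted verbatim from Voisin (\emph{Lemme} $1.5$ of~\cite{VoisinstabLag}), the inclusion $\ker q(L,\cdot) \subset \ker j^*$ is quoted from Matsushita (Lemma $2.2$ of~\cite{MatsushitaDefoLagFib}), and the reverse inclusion is then obtained by the dimension count $b_2(X)-1 = \dim\ker q(L,\cdot) \le \dim\ker j^* \le b_2(X)-1$, the last inequality coming from the non-vanishing of $j^*$ on an ample class. You instead give a self-contained argument: the cycle of inclusions $\ker j^* \subseteq \ker\mu_{[F]} \subseteq \ker q(L,\cdot) \subseteq \ker j^*$, with the first step by the projection formula, the second by polarizing the Fujiki relation (correctly exploiting $q(L,L)=0$ to reduce the sum over matchings to a single product, and $\int_X L^nH^n \ne 0$ to get $q(L,H)\ne 0$), and the third --- the genuinely non-formal step --- by combining the vanishing $\int_F (j^*\alpha)^k(j^*H)^{n-k}=0$ for $k=1,2$ with the Lagrangian condition $j^*\eta = 0$ (placing $j^*\alpha$ in $H^{1,1}(F,\bR)$) and Hodge--Riemann positivity on primitive real $(1,1)$-classes of $(F, j^*H)$. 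This is essentially a proof of the cited Matsushita/Voisin inputs rather than a use of them; what it buys is transparency and independence from the references, at the cost of length. I checked the details (the matching count in the polarized Fujiki relation, the primitivity of $j^*\alpha$, the sign in Hodge--Riemann, and the degenerate case $n=1$) and found no gap.
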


\begin{proof}
The first equality is exactly the statement of~\cite[Lemme $1.5$]{VoisinstabLag}. For the second equality, by~\cite[Lemma 2.2]{MatsushitaDefoLagFib}, we have the inclusion $\ker q(L,\cdot) \subset \ker j^*$. It follows that
$$b_2(X) - 1 = \dim_\bQ \ker q(L,\cdot) \le  \dim_\bQ \ker j^* \le b_2(X) - 1$$
where the last inequality results from the non-vanishing of $j^*$ (on any ample divisor class). Hence $\ker q(L,\cdot) = \ker j^*$ for dimensional reasons.
\end{proof}

\begin{proof}[Proof of Theorem~\ref{thm-indep}]

Let $\pi : X \to B$ be a Lagrangian fibration on a polarized hyper-Kähler manifold $(X,H)$ and let $D$ be a divisor in $X$. Let $F$ denote a fiber of $\pi$. If $[D]_{|F} = 0$ in $H^2(F,\bQ)$, then $[D] \cdot [L]^n \in H^{2n+2}(X,\bQ)$ is proportional to  $j_*[D] = 0$. Therefore if $\dim X = 4$, then $D^n \cdot L^n = 0$ in $\CH_0(X)$ by~\cite[Theorem $0.8$]{VoisinMW}. If $[D]_{|F} \ne 0$ in $H^2(F,\bQ)$, then since $j^* : H^2(X,\bQ) \to H^2(F,\bQ)$ is of rank one by Lemma~\ref{lem-matsu}, either $D_{|F}$ or $-D_{|F}$ is ample on $F$. Suppose without loss of generality that $D_{|F}$ is ample on $F$, then by Theorem~\ref{thm-constr}, there exists $d \in \bZ \bss \{0\}$ such that $D^n \cdot F=d[x]$ in $\CH_0(X)$ for any point $x$ in the Lagrangian constant cycle subvariety $\gS_{\pi,D}$. Since $D^n \cdot L^n$ is non-trivially proportional to $D^n \cdot F$ in $\CH_0(X)$ by Corollary~\ref{cor-classfiblag}, this proves $i)$.

To prove $ii)$, let $H_1$ and $H_2$ be two divisor classes such that $H_1$ is ample and $H_2$ satisfies the assumption of Theorem~\ref{thm-constr}. By Lemma~\ref{lem-matsu}, for a smooth fiber $F_b \colonec \pi^{-1}(b)$, there exist $\ga,\gb \in \bZ\bss\{0\}$ such that $(\ga {H_1}  - \gb H_2 )_{|{F}_b}$ is cohomologous to $0$ on $F_b$. It follows that $(\ga {H_1}  - \gb H_2 )_{|{F}_b} $  is cohomologous to $0$ on $F_b$ for all $b \in U$ where $U \subset B$ is the smooth locus of $\pi$. This implies by Lemma~\ref{lem-matsu} that the product $[F_b] \cdot (\ga {H_1}  - \gb H_2)$, hence $L^2 \cdot (\ga {H_1}  - \gb H_2)$, is cohomologous to $0$ on $X$. Since a very general deformation of $X$ preserving the Lagrangian fibration and $H_1, H_2$ satisfies Matsushita's conjecture, we can apply~\cite[Theorem 0.8]{VoisinMW} so that $\ga {H_1} \cdot L^2 = \gb H_2 \cdot L^2$ in $\CH^3(X)\otimes \bQ$. It follows that
$$\ga^2 H_1^2 \cdot L^2 -\gb^2 H_2^2 \cdot L^2 = (\ga {H_1} - \gb H_2 ) \cdot (\ga {H_1} + \gb H_2 ) \cdot L^2 = 0 \ \text{ in } \CH_0(X).$$
Since the zero-cycles supported on the constant cycle subvarieties $\gS_{\pi,H_1}$ and $\gS_{\pi,H_2}$ constructed above are proportional to $H_1^2 \cdot L^2$ and $H_2^2 \cdot L^2$ in $\CH_0(X)$ respectively, the second statement of Theorem~\ref{thm-constr} follows.

Now we prove $iii)$. Let $\pi : X \to B$ and $\pi' : X \to B'$ be two Lagrangian fibrations and let $L \colonec \pi^*c_1(\cO_B(1))$ and $L' \colonec \pi^*c_1(\cO_{B'}(1))$. By Theorem~\ref{thm-constr}, what we need to prove is reduced to the statement that  $H^2 \cdot L^2 $ is proportional to $H^2 \cdot {L'}^2$ in $\CH_0(X)$, so we can assume that $L$ and $L'$ are not proportional, otherwise the proof is finished. Since the restriction of $q$ to $\NS(X)_\bQ$ is of signature $(1,1-\rho(X))$, the restriction of $q$ to the two-dimensional subspace generated by $L$ and $L'$ cannot be zero. Since $q(L,L) = q(L',L') = 0$, this implies that $q(L,L') \ne 0$. By Lemma~\ref{lem-matsu}, we have $j_*L'_{|F} = L' \cdot [F] \ne 0 \in H^6(X,\bQ)$ where $j:F \hookrightarrow X$ is the inclusion of a smooth fiber of $\pi$, so $L'_{|F} \ne 0  \in H^2(F,\bQ)$. Again as $j^* : H^2(X,\bQ) \to H^2(F,\bQ)$ is of rank one, either $L'_{|F}$ or $(-L')_{|F}$ is ample. Similarly either $L_{|F'}$ or $(-L)_{|F'}$ is ample where $F'$ is a smooth fiber of $\pi'$.  However as $L$ and $L'$ are pullbacks of ample classes in $B$ and $B'$ respectively, the classes $(-L)_{|F'}$ and $(-L')_{|F}$ cannot be ample. So necessarily, $L'_{|F}$ and $L_{|F'}$ are  ample. Now the second point of Theorem~\ref{thm-indep} implies that the class of a point in $\gS_{\pi,H}$ equals  the class of a point in $\gS_{\pi,L'}$ in $\CH_0(X)$, which is proportional to $L^2 \cdot L'^2 \ne 0$ by Theorem~\ref{thm-constr}. Similarly the class of a point $\gS_{\pi',H}$ is also proportional to $L'^2 \cdot L^2$ in $\CH_0(X)$, which finishes the proof of $iii)$.
\end{proof}

\section*{Acknowledgement}
This note was written up as a part of the author's PhD thesis at CMLS, École Polytechnique. I would like to thank my thesis advisor C. Voisin for introducing me to this beautiful subject and for valuable discussions. I also thank O. Benoist, L. Fu, Ch. Lehn, and G. Pacienza for interesting remarks and questions, and also the referees for carefully reading the manuscript and suggestions.

\bibliographystyle{plain}
\bibliography{svcc2}

\end{document}